\documentclass[12pt]{amsart}

\usepackage[english]{babel}
\usepackage{vmargin,graphicx,amsmath,amssymb,color,subfigure,enumerate,csquotes, dsfont, mathrsfs}
\numberwithin{equation}{section}

\usepackage{color}
\definecolor{gr}{rgb}   {0.,   0.69,   0.23 }
\definecolor{bl}{rgb}   {0.,   0.5,   1. }
\definecolor{mg}{rgb}   {0.85,  0.,    0.85}
\definecolor{or}{rgb}   {0.9,  0.5,   0.}

\definecolor{webred}{rgb}{0.75,0,0}
\definecolor{webgreen}{rgb}{0,0.75,0}
\usepackage[citecolor=webgreen,colorlinks=true,linkcolor=webred]{hyperref}

\newtheorem{theorem}{Theorem}[section]
\newtheorem{lemma}[theorem]{Lemma}

\newtheorem{remark}[theorem]{Remark}

\newtheorem{proposition}[theorem]{Proposition}

\newtheorem{definition}[theorem]{Definition\rm}


\newcommand{\dist}{\mathsf{dist}}

\renewcommand{\Re}{\mathsf{Re}}
\renewcommand{\Im}{\mathsf{Im}}

\newcommand{\Dom}{\mathsf{Dom}}

\newcommand{\N}{\mathbb{N}}

\newcommand{\R}{\mathbb{R}}
\newcommand{\C}{\mathbb{C}}
\newcommand{\A}{\mathbf{A}}

\newcommand{\B}{\mathbf{B}}

\newcommand{\sH}{\mathsf{H}}
\newcommand{\sL}{\mathsf{L}}
\newcommand{\sB}{\mathsf{B}}
\newcommand{\Id}{\mathsf{Id}}
\newcommand{\eps}{\varepsilon}
\newcommand{\dx}{\,\mathrm{d}}
\newcommand{\cD}{\mathcal{D}}
\newcommand{\cV}{\mathcal{V}}

\begin{document}
\title[Holomorphic extension]{Holomorphic extension of the de Gennes function}
\author{V. Bonnaillie-No\"el}
\address[V. Bonnaillie-No\"el]{D\'epartement de Math\'ematiques et Applications (DMA - UMR 8553), PSL Research University, CNRS, ENS Paris, 45 rue d'Ulm, F-75230 Paris cedex 05, France}
\email{bonnaillie@math.cnrs.fr}
\author{F. H\'erau}
\address[F. H\'erau]{LMJL - UMR6629, Universit\'e de Nantes, 2 rue de la Houssini\`ere, BP 92208, 44322 Nantes Cedex 3, France}
\email{herau@univ-nantes.fr}
\author{N. Raymond}
\address[N. Raymond]{IRMAR, Univ. Rennes 1, CNRS, Campus de Beaulieu, F-35042 Rennes cedex, France}
\email{nicolas.raymond@univ-rennes1.fr}
\date{\today}
\maketitle

\begin{abstract}
This note is devoted to prove that the de Gennes function has a holomorphic extension on a strip containing the real axis.
\end{abstract}

\maketitle

\section{Introduction}

\subsection{About the de Gennes operator}
The de Gennes operator plays an important role in the investigation of the magnetic Schr\"odinger operator. Consider the half-plane 
\[\R^2_{+}=\{(s,t)\in\R^2 : t>0\}\] 
and a magnetic field $\B(s,t)=1$. We also introduce an associated vector potential $\A(s,t)=(0,-t)$ such that $\B(s,t)=\nabla\times\A(s,t)$. Then,  we define $\mathcal{L}_{\A}$ the Neumann realization of the differential operator $(-i\nabla+\A)^2$ acting on $\sL^2(\R^2_{+})$. By using the partial Fourier transform in $s$, we obtain the direct integral:
\[\mathcal{F}\mathcal{L}_{\A}\mathcal{F}^{-1}=\int_{\R}^{\oplus}\mathfrak{L}_{\xi}\dx\xi\,,\]
where, for $\xi\in\R$, $\mathfrak{L}_{\xi}$ denotes the Neumann realization on $\R_{+}$ of the differential operator $-\partial_{t}^2+(\xi-t)^2$. The self-adjoint operator $\mathfrak{L}_{\xi}$ is called the de Gennes operator with parameter $\xi\in\R$. Let us give a precise definition of this operator. For $\xi\in\R$, we introduce the sesquilinear form
\[\forall \psi\in \sB^1(\R_{+})\,,\quad q_{\xi}(\varphi,\psi)=\int_{\R_{+}} \varphi'\overline{\psi}'+(t-\xi)^2\varphi\overline{\psi}\dx t\,,\]
where
\[\sB^1(\R_{+})=\left\{\psi\in\sH^1(\R_{+}): t\psi\in\sL^2(\R_{+})\right\}\,.\]
This form is continuous on the Hilbert space $\sB^1(\R_{+})$ and Hermitian. Up to the addition of a constant, $q_{\xi}$ is coercive on $\sB^1(\R_{+})$. Therefore, in virtue of the Lax-Milgram representation theorem (see for instance \cite[Theorem 3.4]{Hel13}), we can consider the associated closed (and self-adjoint, since the form is Hermitian) operator $\mathfrak{L}_{\xi}$ and its domain is
\[\Dom(\mathfrak{L}_{\xi})=\left\{\psi\in\sB^2(\R_{+}) : \psi'(0)=0\right\}\subset\sB^1(\R_{+})\,,\]
where
\[\sB^2(\R_{+})=\left\{\psi\in\sH^2(\R_{+}) : t^2\psi\in\sL^2(\R_{+})\right\}\,.\]
Since $\sB^1(\R_{+})$ is compactly embedded in $\sL^2(\R_{+})$, we deduce that $\mathfrak{L}_{\xi}$ has compact resolvent and we can consider the non-decreasing sequence of its eigenvalues $(\mu_{k}(\xi))_{k\in\N^*}$. Each eigenspace is of dimension one (due to the Neumann  condition for instance). 

\begin{definition}
We call the function $\R\ni\xi\mapsto\mu_{1}(\xi)\in\R$ the de Gennes function. For shortness, we let $\mu=\mu_{1}$.
\end{definition}
In the terminology of Kato's perturbation theory (see \cite[Section VII.2]{Kato66}), the family of self-adjoint operators $(\mathfrak{L}_{\xi})_{\xi\in\R}$ is analytic of type $(A)$. In other words, for all $\xi_{0}\in\R$, the family $(\mathfrak{L}_{\xi})_{\xi\in\R}$ can be extended into a family of closed operators $(\mathfrak{L}_{\xi})_{\xi\in\cV}$ with $\cV$ a complex open ball containing $\xi_{0}$ such that
\begin{enumerate}[-]
\item the domain $\Dom(\mathfrak{L}_{\xi})$ does not depend on $\xi\in\cV$,
\item for all $\psi\in\Dom(\mathfrak{L}_{\xi_{0}})$, the map $\cV\ni\xi\mapsto \mathfrak{L}_{\xi}\psi$ is holomorphic.
\end{enumerate}
By using Kato's theory and since $\mu$ is a simple eigenvalue, $\mu$ is \emph{real} analytic, or equivalently, for all given $\xi_{0}\in\R$, $\mu$ has a local holomorphic extension in a neighborhood of $\xi_{0}$. We refer to \cite[Section 2.4]{Ray17} for a direct proof. We also recall that $\mu$ has a minimum (see \cite{DauHel} or \cite[Proposition 3.2.2]{FouHel10}). We let $\displaystyle{\Theta_{0}=\min_{\xi\in\R} \mu(\xi)}$. 

This note answers the following question: \emph{Can the de Gennes function $\mu$ be holomorphically extended on a strip about the real axis?}

\subsection{Motivations and result}
The aim of this note is to prove the following theorem.
\begin{theorem}\label{theo.main}
There exist $\eps>0$ and $F$ a holomorphic function on the strip 
\[S_{\eps}:=\{\xi\in\C : |\Im\xi|<\eps\},\]
such that, for all $\xi\in\R$, $\mu(\xi)=F(\xi)$. Moreover, for all $\xi\in S_{\eps}$, we have
\[\Re F(\xi)\geq \mu(\Re \xi)-(\Im\xi)^2\,,\]
and $F(\xi)$ belongs to the discrete spectrum of $\mathfrak{L}_{\xi}$.
\end{theorem}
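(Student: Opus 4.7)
The plan is to extend $q_\xi$ holomorphically to complex $\xi$, invoke Kato's theory of type (B) holomorphic families to obtain an $m$-sectorial operator $\mathfrak{L}_\xi$ with compact resolvent, and extract $F(\xi)$ via a Riesz projection along a contour of uniform size around $\mu(\Re \xi)$. For $\xi = a+ib \in \C$ and $\varphi \in \sB^1(\R_+)$, the identity $(t-\xi)^2 = (t-a)^2 - 2ib(t-a) - b^2$ gives
\begin{align*}
\Re q_\xi(\varphi,\varphi) &= q_a(\varphi,\varphi) - b^2\|\varphi\|^2,\\
\Im q_\xi(\varphi,\varphi) &= -2b\int_{\R_+}(t-a)|\varphi|^2\dx t.
\end{align*}
Because $\|(t-a)\varphi\|^2 \leq q_a(\varphi,\varphi)$, Cauchy--Schwarz and $2xy\leq x^2+y^2$ yield
\[|\Im q_\xi(\varphi,\varphi)| \leq |b|\bigl(q_a(\varphi,\varphi)+\|\varphi\|^2\bigr),\]
so $q_\xi$ is a closed sectorial form on $\sB^1(\R_+)$, depending holomorphically on $\xi$. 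Kato's representation theorem then produces an $m$-sectorial operator $\mathfrak{L}_\xi$, and the compact embedding $\sB^1(\R_+) \hookrightarrow \sL^2(\R_+)$ gives compact resolvent.

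The lower bound on $\Re F$ is immediate once $F$ is known to be an eigenvalue: if $\mathfrak{L}_\xi \varphi = \lambda \varphi$ with $\varphi\neq 0$, then $\lambda = q_\xi(\varphi,\varphi)/\|\varphi\|^2$, whence
\[\Re\lambda = \frac{q_a(\varphi,\varphi)}{\|\varphi\|^2} - b^2 \geq \mu(a) - b^2\]
by the variational characterization of $\mu(a)$. To identify $F(\xi)$ as an eigenvalue, I argue via a Riesz projection. The spectral gap $g(a):=\mu_2(a)-\mu(a)$ is continuous and positive, with $g(a) \to 2$ as $a\to+\infty$ (harmonic oscillator limit) and $g(a) \to +\infty$ as $a\to-\infty$ (Airy-type rescaling $t = s(2|a|)^{-1/3}$ of $-\partial_t^2 + (t-a)^2$ near the Neumann wall), so $g_0 := \inf_{a\in\R} g(a) > 0$. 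Fix $\varepsilon>0$ small depending only on $g_0$. For $|b|<\varepsilon$, a Neumann series on the resolvent of $\mathfrak{L}_\xi$, based on the $O(|b|)$ relative form bound of $q_\xi - q_a$ established above, shows that the circle $\Gamma_a := \partial B(\mu(a), g_0/3)$ lies in the resolvent set of $\mathfrak{L}_\xi$ with a uniform estimate. The Riesz projection
\[P(\xi) := \frac{1}{2\pi i}\oint_{\Gamma_a}(\zeta - \mathfrak{L}_\xi)^{-1}\dx\zeta\]
is then holomorphic on $\{|\xi-a|<\varepsilon\}$; since $P(a)$ is the rank-one orthogonal projection onto $\mathrm{span}(\psi_1^a)$ (with $\psi_1^a$ the first eigenfunction of $\mathfrak{L}_a$), continuity forces $\mathrm{rank}\,P(\xi)=1$ throughout, and $F(\xi) := \Tr(\mathfrak{L}_\xi P(\xi))$ is a simple, holomorphic eigenvalue of $\mathfrak{L}_\xi$. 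Uniqueness of analytic continuation glues these local definitions into a single holomorphic function $F$ on $S_\varepsilon$, lying in the discrete spectrum of $\mathfrak{L}_\xi$.

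The principal obstacle is the uniformity in $a\in\R$ of the resolvent estimate on $\Gamma_a$. The sectorial form framework (Kato type B) is what makes this tractable: both the relative form bound of the perturbation $q_\xi - q_a$, which is $O(|b|)$, and the spectral gap $g_0$ are uniform in $a$, so the radius of convergence of the Neumann series is itself uniform. Establishing the asymptotic lower bounds on $g(a)$ — in particular the Airy-type blow-up as $a\to -\infty$, where $\mu(a)\sim a^2\to\infty$ while the gap grows like $(2|a|)^{2/3}$ — is the most delicate ingredient of the uniformity argument.
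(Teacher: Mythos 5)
Your proposal is correct and follows essentially the same architecture as the paper: a uniform spectral gap obtained from the $\xi\to+\infty$ harmonic-oscillator limit and the $\xi\to-\infty$ Airy rescaling, perturbative control of the resolvent of $\mathfrak{L}_\xi$ near a circle about $\mu(\Re\xi)$, a Riesz projection of constant rank one, and the coercivity estimate $\Re q_\xi \geq q_{\Re\xi} - (\Im\xi)^2\|\cdot\|^2$ for the real-part lower bound. The only substantive divergence is in how the resolvent estimate is made uniform: you invoke a form-level Neumann series based on the $O(|b|)$ relative form bound $|\Im q_\xi(\varphi,\varphi)| \leq |b|\bigl(q_a(\varphi,\varphi)+\|\varphi\|^2\bigr)$ together with Kato's type (B) machinery, whereas the paper stays at the operator level, writing $\mathfrak{L}_\xi - z = (\Id - K)(\mathfrak{L}_{\Re\xi}-z)$ and proving $\|K\|<1$ directly from the explicit bound $\|(t-\Re\xi)(\mathfrak{L}_{\Re\xi}-z)^{-1}\| \le r_0^{-1/2}$ (obtained by pairing with $v=(\mathfrak{L}_{\Re\xi}-z)^{-1}u$). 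Your form-level route is conceptually cleaner but leans more heavily on the $m$-sectorial representation theorem and square-root domain identities to convert the form bound into an operator Neumann series; the paper's operator-level estimate is more self-contained. Similarly, you define $F(\xi)=\Tr(\mathfrak{L}_\xi P(\xi))$ while the paper uses a Rayleigh-type quotient against a fixed test function $\psi_0$; both give holomorphy, though the quotient makes the locality and non-vanishing of the denominator explicit. Neither difference affects correctness.
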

\begin{remark}
As we can see from the proof, for all $k\in\N^*$, $\mu_{k}$ has a holomorphic extension on a strip about the real axis. Of course, the size of this strip depends on $k$ and is expected to shrink when $k$ becomes large.
\end{remark}
\begin{remark}
The method used in this note can be applied to the family of Montgomery operators 
\[-\partial^2_{t}+\left(\xi-\frac{t^{n+1}}{n+1}\right)^2\,,\]
acting on $\sL^2(\R)$, with $n\in\N^*$ and $\xi\in\R$. Their eigenvalues have holomorphic extensions on a strip about the real axis. The Montgomery operators appear in the case of vanishing magnetic fields.
\end{remark}

This theorem is motivated by various spectral questions. Firstly, it plays an important role in the investigation of the resonances induced by local perturbations of $\mathcal{L}_{\A}$. Indeed, these resonances can often be defined by analytic dilations (see for instance \cite[Chapter 16]{HS96}) and the existence of a holomorphic extension of the band functions $\mu_{k}$ could be useful to reveal new magnetic spectral phenomena. Secondly, it is also strongly related to complex WKB analysis (as we have shown in \cite{BHR16}). In particular, the WKB constructions in \cite[Section 1.2.2]{BHR16} are {\it a priori} local and an accurate knowledge of the holomorphy strip would allow the extend the domain of validity of these constructions (see \cite[Section 4.2]{BHR16} where the size of the holomorphy strip is involved). Moreover, this holomorphic extension is crucial in the study of the semiclassical magnetic tunneling effect when there are symmetries. Actually, this effect can only be fully understood in the complexified phase space.

\section{Proof of the theorem}

\subsection{Preliminary considerations}
Let us prove the following separation lemma.
\begin{lemma}\label{lem.prelim}
For all $k\in\N^*$, there exists $c_{k}>0$ such that
\[\forall\xi\in\R\,,\quad \mu_{k+1}(\xi)-\mu_{k}(\xi)\geq c_{k}\,.\]
\end{lemma}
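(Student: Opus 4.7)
The plan is to combine the strict positivity of the gap (from simplicity of the eigenvalues) with a control of its asymptotic behavior at infinity, then conclude by compactness on the remaining bounded range. Since each $\mu_k(\xi)$ is a simple eigenvalue of $\mathfrak{L}_{\xi}$ and $\xi \mapsto \mu_k(\xi)$ is real analytic on $\R$ (both facts recalled in the introduction), the map $\xi \mapsto \mu_{k+1}(\xi) - \mu_k(\xi)$ is continuous and strictly positive on $\R$. It therefore suffices to prove that it stays bounded away from zero as $|\xi| \to \infty$, after which a standard compactness argument on $[-M,M]$ will yield the uniform constant $c_k > 0$.

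For $\xi \to +\infty$, the well $(t-\xi)^2$ sits far to the right of the Neumann boundary $t=0$. Taking truncated Hermite functions centered at $\xi$ as trial functions in the min--max principle, and conversely extending Neumann eigenfunctions by even reflection to compare with the harmonic oscillator $-\partial_t^2 + (t-\xi)^2$ on $\R$ (whose spectrum is $\{2k-1 : k \in \N^*\}$), one obtains $\mu_k(\xi) = 2k-1 + O(\e^{-c\xi^2})$ for some $c>0$, so $\mu_{k+1}(\xi) - \mu_k(\xi) \to 2$. For $\xi \to -\infty$, the minimum of $(t-\xi)^2$ on $\R_{+}$ equals $\xi^2$ and is attained at $t=0$. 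Setting $\eta = (2|\xi|)^{1/3}$ and rescaling $t = s/\eta$, the operator $(\mathfrak{L}_{\xi} - \xi^2)/\eta^2$ is unitarily equivalent to $-\partial_s^2 + s + s^2/\eta^4$ on $\R_{+}$ with Neumann condition at $s=0$. As $\eta \to \infty$ this converges, in norm-resolvent sense, to the Airy--Neumann operator $-\partial_s^2 + s$ on $\R_{+}$, whose eigenvalues are the $-a_k'$ where $a_k'$ denotes the $k$-th negative zero of $\mathrm{Ai}'$. Hence $\mu_k(\xi) = \xi^2 + (2|\xi|)^{2/3}(-a_k') + o(|\xi|^{2/3})$ and the gap grows like $|\xi|^{2/3}$.

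The main obstacle lies in the $\xi \to -\infty$ direction, where both eigenvalues diverge and one must carefully track their difference through the Airy rescaling, in particular justifying the norm-resolvent convergence of the rescaled operators uniformly in the range of indices $k, k+1$. By contrast, the $+\infty$ case is cleaner, because the limiting operator has the explicit harmonic-oscillator spectrum on $\R$ and the boundary effect enters only through exponentially small tails of Hermite functions.
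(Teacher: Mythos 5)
Your proposal is correct and follows essentially the same route as the paper: simplicity and continuity give pointwise positivity of the gap, the limit $\mu_k(\xi)\to 2k-1$ as $\xi\to+\infty$ gives a positive gap at $+\infty$, the Airy-type rescaling at $\xi\to-\infty$ (your $-a_k'$ and the paper's Neumann eigenvalues $\nu_k$ of $D_\tau^2+2\tau$ agree after the scaling $\tau=2^{-1/3}s$) shows the gap diverges, and compactness on the remaining bounded interval finishes. The only cosmetic differences are that you quantify the exponential convergence at $+\infty$ and invoke norm-resolvent convergence rather than the paper's ``variant of the harmonic approximation,'' but the substance and structure of the argument are identical.
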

\begin{proof}
We recall that the functions $\mu_{k}$ are real analytic and that, for all integer $k$, we have $\mu_{k}<\mu_{k+1}$. As a consequence of the harmonic approximation (see for instance \cite[Section 7.1]{FouHel10} or \cite[Section 3.2]{Ray17}), we have
\begin{equation}\label{eq.+infty}
\forall k\in\N^*\,,\quad\lim_{\xi\to+\infty}\mu_{k}(\xi)=2k-1\,.
\end{equation}
Therefore, for all $k\in\N^*$, there exist $\Xi_{k}, d_{k}>0$ such that, for all $\xi\geq \Xi_{k}$, 
\[\mu_{k+1}(\xi)-\mu_{k}(\xi)\geq d_{k}\,.\]
For $\alpha>0$, let us consider $\mathfrak{L}_{-\alpha}$. By dilation, this operator is unitarily equivalent to the Neumann  realization on $\R_{+}$ of the differential operator
\begin{equation*}
\alpha^2(\alpha^{-4}D^2_{\tau}+(\tau+1)^2)\,,
\end{equation*}
where we denote $D_{t}=-i\partial_{t}$.
The potential $\R_{+}\ni\tau\mapsto(\tau+1)^2$ is minimal at $\tau=0$ and thus a variant of the harmonic approximation near $\tau=0$ shows that
\begin{equation}\label{eq.alpha-to+infty}
\forall k\in\N^*\,,\quad\mu_{k}(-\alpha)\underset{\alpha\to+\infty}{=}\alpha^2+\nu_{k} \alpha^{\frac{2}{3}}+o( \alpha^{\frac{2}{3}})\,,
\end{equation}
where $(\nu_{k})_{k\in\N^*}$ is the increasing sequence of the eigenvalues of the Neumann realization on $\R_{+}$ of $D^2_{\tau}+2\tau$. Let us briefly recall the main steps of the proof of \eqref{eq.alpha-to+infty}. The regime $\alpha\to+\infty$ is equivalent to the semiclassical regime $h=\alpha^{-2}\to 0$ and one knows that the eigenfunctions associated with the low lying eigenvalues are concentrated near the minimum of the potential $\tau=0$. Near this point, we can perform a Taylor expansion so that the operator asymptotically becomes  
\[\alpha^2(\alpha^{-4}D^2_{\tau}+1+2\tau)\,.\]
Then, we homogenize this operator with the rescaling $\tau=h^{\frac{2}{3}}\tilde\tau$ and the conclusion follows.

We deduce that
\begin{equation}\label{eq.-infty}
\lim_{\alpha\to+\infty}\left(\mu_{k+1}(-\alpha)-\mu_{k}(-\alpha)\right)=+\infty\,.
\end{equation}
Combining \eqref{eq.+infty}, \eqref{eq.-infty}, the simplicity of the $\mu_{k}$ and their continuity, the result follows.
\end{proof}

\subsection{The family $(\mathfrak{L}_{\xi})_{\xi\in S_{\eps}}$}
Let us fix $\eps>0$ and explain how the operator $\mathfrak{L}_{\xi}$ is defined for $\xi\in S_{\eps}$. We let, for all $\xi\in S_{\eps}$,
\[\forall\varphi,\psi\in\sB^1(\R_{+})\,,\quad q_{\xi}(\varphi,\psi)=\int_{\R_{+}} \varphi'\overline{\psi}'+(t-\xi)^2\varphi\overline{\psi}\dx t\,.\]
The sesquilinear form $q_{\xi}$ is well defined and continuous on $\sB^1(\R_{+})$. In order to apply the Lax-Milgram representation theorem, let us consider the quadratic form $\sB^1(\R_{+})\ni\psi\mapsto \Re q_{\xi}(\psi,\psi)$. We have
\[\Re q_{\xi}(\psi,\psi)=\|\psi'\|^2+\int_{\R_{+}}\Re(t-\xi)^2|\psi|^2\dx t\,,\]
and
\[\Re(t-\xi)^2=(t-\Re\xi)^2-(\Im\xi)^2\,.\]
In particular, we have
\begin{align}\label{eq.lbqxi}
\nonumber\Re q_{\xi}(\psi,\psi)\geq q_{\Re \xi}(\psi,\psi)-(\Im\xi)^2)\|\psi\|^2&\geq(\mu(\Re\xi)-(\Im\xi)^2)\|\psi\|^2\\
&\geq (\Theta_{0}-(\Im\xi)^2)\|\psi\|^2\,.
\end{align}
Thus, up to the addition of constant, $\Re q_{\xi}$ is coercive on $\sB^1(\R_{+})$. We deduce that, for all $\eps>0$ and all $\xi\in S_{\eps}$, there exist $C,c>0$ such that, for all $\psi\in \sB^1(\R_{+})$, 
\[\left|q_{\xi}(\psi,\psi)+C\|\psi\|^2\right|\geq c\|\psi\|^2_{\sB^1(\R_{+})}\,.\]
Therefore, we can apply the Lax-Milgram theorem and define a closed operator $\mathfrak{L}_{\xi}$. It satisfies
\[\forall\psi\in\sB^1(\R_{+})\,,\forall\varphi\in\Dom(\mathfrak{L}_{\xi})\,,\quad q_{\xi}(\varphi,\psi)=\langle\mathfrak{L}_{\xi}\varphi,\psi\rangle\,.\]
We can easily show that
\[\Dom(\mathfrak{L}_{\xi})=\left\{\psi\in\sB^2(\R_{+}): \psi'(0)=0\right\}\,,\]
so that $\mathfrak{L}_{\xi}$ has compact resolvent and $(\mathfrak{L}_{\xi})_{\xi\in S_{\eps}}$ is a holomorphic family of type $(A)$. If $\eps\in(0,\sqrt{\Theta_{0}})$, as a consequence of \eqref{eq.lbqxi}, we get that, for all $\xi\in S_{\eps}$, $\mathfrak{L}_{\xi}$ is bijective. Indeed, we have, for all $\psi\in\Dom(\mathfrak{L}_{\xi})$, 
\[(\Theta_{0}-\eps^2)\|\psi\|\leq\|\mathfrak{L}_{\xi}\psi\|\,.\]
From this, the operator $\mathfrak{L}_{\xi}$ is injective with closed range. Since $\mathfrak{L}^*_{\xi}=\mathfrak{L}_{\overline{\xi}}$, we deduce that $\mathfrak{L}^*_{\xi}$ is also injective and thus $\mathfrak{L}_{\xi}$ has a dense image. We get that, for all $\xi\in S_{\eps}$,
\[\|\mathfrak{L}_{\xi}^{-1}\|\leq (\Theta_{0}-(\Im\xi)^2)^{-1}\,.\]

\subsection{Resolvent and projection estimates}
Let us now prove the main result of this note. 

\subsubsection{Difference of resolvents}
For $0<r_{1}<r_{2}$, and $z_{0}\in\C$, we define the annulus
\[A_{r_{1}, r_{2}}(z_{0})=\{z\in\C : r_{1}<|z-z_{0}|<r_{2}\}\,.\]
We consider, for all $r>0$ and all $\xi\in S_{\eps}$,
\[\mathcal{A}_{r,\xi}:=A_{r,2r}(\mu(\Re\xi))\,.\]
Using Lemma \ref{lem.prelim} with $k=1$, we know that there exists $r_{0}>0$ such that
\begin{enumerate}[-]
\item for all $\xi\in S_{\eps}$, 
\[\mathcal{A}_{r_{0},\xi}\subset\rho(\mathfrak{L}_{\Re\xi})\,,\quad\dist(\mathcal{A}_{r_{0},\xi}, \mathsf{sp}(\mathfrak{L}_{\Re\xi}))\geq r_{0}>0\,,\]
where $\rho(\mathfrak{L}_{\Re\xi})$ denotes the resolvent set of $\mathfrak{L}_{\Re\xi}$,
\item the disk of center $\mu(\Re \xi)$ with radius $r_{0}$, denoted by $\cD(\mu(\Re\xi),r_{0})$, contains only one eigenvalue of $\mathfrak{L}_{\Re\xi}$. 
\end{enumerate}
The following proposition states an approximation of the resolvent of $\mathfrak{L}_{\xi}$ by the one of $\mathfrak{L}_{\Re\xi}$ when $\eps$ goes to $0$.
\begin{proposition}\label{prop.diff-res}
There exist $C, \eps_{0}>0$ such that for all $\eps\in(0,\eps_{0})$ and all $\xi\in S_{\eps}$, we have $\mathcal{A}_{r_{0},\xi}\subset\rho(\mathfrak{L}_{\xi})$. In addition, we have, for all $\xi\in S_{\eps}$ and $z\in\mathcal{A}_{r_{0},\xi}$,
\begin{equation}\label{eq.diff-res}
\left\|(\mathfrak{L}_{\xi}-z)^{-1}-(\mathfrak{L}_{\Re\xi}-z)^{-1}\right\|\leq C\eps\,.
\end{equation}
\end{proposition}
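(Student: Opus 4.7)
The strategy combines the second resolvent identity with a Neumann-series argument. The difference of the two operators is a multiplication operator with coefficient proportional to $\Im\xi$,
\[
V_\xi := \mathfrak{L}_\xi - \mathfrak{L}_{\Re\xi} = (t-\xi)^2 - (t-\Re\xi)^2 = -2i\,\Im\xi\,(t-\Re\xi) - (\Im\xi)^2,
\]
albeit unbounded through the linear factor $t-\Re\xi$. Both assertions of the proposition will reduce to a single estimate: $\|V_\xi(\mathfrak{L}_{\Re\xi}-z)^{-1}\| \leq C\eps$ uniformly for $z\in\mathcal{A}_{r_0,\xi}$.

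The key step is the control of $\|(t-\Re\xi)(\mathfrak{L}_{\Re\xi}-z)^{-1}\|$. I would use the form inequality $\|(t-\Re\xi)\psi\|^2 \leq q_{\Re\xi}(\psi,\psi) = \Re\langle\mathfrak{L}_{\Re\xi}\psi,\psi\rangle$, apply it to $\psi = (\mathfrak{L}_{\Re\xi}-z)^{-1}\varphi$, and substitute $\mathfrak{L}_{\Re\xi}\psi = \varphi + z\psi$ to obtain
\[
\|(t-\Re\xi)\psi\|^2 \leq \Re\langle\varphi,\psi\rangle + \Re z\,\|\psi\|^2.
\]
Combined with the bound $\|\psi\|\leq\|\varphi\|/r_0$, itself a consequence of the self-adjointness of $\mathfrak{L}_{\Re\xi}$ and the uniform spectral gap furnished by Lemma~\ref{lem.prelim}, this yields an $O(1)$ control of $(t-\Re\xi)(\mathfrak{L}_{\Re\xi}-z)^{-1}$. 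Since $|\Im\xi|\leq\eps$, the full perturbation then satisfies $\|V_\xi(\mathfrak{L}_{\Re\xi}-z)^{-1}\|\leq C\eps$.

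With this estimate in hand, one picks $\eps_0$ with $C\eps_0<1$ and factorises
\[
\mathfrak{L}_\xi - z = (\mathfrak{L}_{\Re\xi}-z)\bigl(\Id + (\mathfrak{L}_{\Re\xi}-z)^{-1}V_\xi\bigr),
\]
so the bracketed factor is invertible by Neumann series. This simultaneously gives $\mathcal{A}_{r_0,\xi}\subset\rho(\mathfrak{L}_\xi)$ and a uniform bound $\|(\mathfrak{L}_\xi-z)^{-1}\| \leq C'$. The second resolvent identity
\[
(\mathfrak{L}_\xi-z)^{-1} - (\mathfrak{L}_{\Re\xi}-z)^{-1} = -(\mathfrak{L}_\xi-z)^{-1} V_\xi (\mathfrak{L}_{\Re\xi}-z)^{-1}
\]
then yields \eqref{eq.diff-res} by combining the $O(1)$ bound on $(\mathfrak{L}_\xi-z)^{-1}$ with the $O(\eps)$ estimate above.

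The main obstacle is that the naive form bound for $\|(t-\Re\xi)(\mathfrak{L}_{\Re\xi}-z)^{-1}\|$ carries a factor $|\Re z|$, itself bounded by $\mu(\Re\xi)+2r_0$ and hence growing as $\Re\xi\to-\infty$ in view of \eqref{eq.alpha-to+infty}. Extracting a genuinely uniform $O(\eps)$ estimate across all of $S_\eps$ therefore requires more care than the raw form inequality — presumably by separating the rank-one singular piece of $(\mathfrak{L}_{\Re\xi}-z)^{-1}$ carried by the first eigenstate from a regular part where the gap of Lemma~\ref{lem.prelim} can be exploited freely, and by using the explicit rank-one structure of $V_\xi$ on that piece. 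This is where the proof is expected to demand its most delicate work.
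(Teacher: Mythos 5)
Your proposal follows exactly the same route as the paper: write
\[
\mathfrak{L}_\xi - z = \bigl(\Id + V_\xi(\mathfrak{L}_{\Re\xi}-z)^{-1}\bigr)(\mathfrak{L}_{\Re\xi}-z),
\qquad V_\xi = -2i\,\Im\xi\,(t-\Re\xi)-(\Im\xi)^2,
\]
bound $\|V_\xi(\mathfrak{L}_{\Re\xi}-z)^{-1}\|$ by $O(\eps)$ using a quadratic-form estimate for $(t-\Re\xi)(\mathfrak{L}_{\Re\xi}-z)^{-1}$ and the spectral bound $\|(\mathfrak{L}_{\Re\xi}-z)^{-1}\|\le r_0^{-1}$, then invoke a Neumann series to get invertibility, and finish with the second resolvent identity. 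That is precisely the skeleton of the paper's argument, and you even produce the paper's intermediate estimates \eqref{eq.L2} and \eqref{eq.H1}.

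Where your proposal departs from the paper is in the scrutiny of the weighted resolvent bound, and here your observation is genuinely on the mark. The paper writes $(D^2_t + (t-\Re\xi)^2)v = (\mathfrak{L}_{\Re\xi}-z)v = u$ and then concludes $\|(t-\Re\xi)v\|^2 \le \langle v,u\rangle$; but the displayed identity is missing the $-zv$ term, and the correct form inequality reads
\[
\|v'\|^2 + \|(t-\Re\xi)v\|^2 = \Re\langle u,v\rangle + (\Re z)\,\|v\|^2,
\]
which carries exactly the $\Re z\,\|v\|^2$ contribution you flag. For $z\in\mathcal{A}_{r_0,\xi}$ one has $\Re z \sim \mu(\Re\xi)$, which by \eqref{eq.alpha-to+infty} grows like $(\Re\xi)^2$ as $\Re\xi\to-\infty$, so the bound \eqref{eq.H1} is not uniform over the strip as the paper asserts. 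This is not merely a presentational slip: acting on the ground state $\varphi_1$ of $\mathfrak{L}_{\Re\xi}$ (concentrated near $t=0$ when $\Re\xi=-\alpha\ll 0$) one has $\|(t-\Re\xi)\varphi_1\|\sim\alpha$ and hence $\|(t-\Re\xi)(\mathfrak{L}_{\Re\xi}-z)^{-1}\|\gtrsim \alpha/r_0$, so the operator norm really is unbounded. Equivalently, Feynman--Hellmann gives $\mu'(\Re\xi)\sim -2|\Re\xi|$ in this regime, so the analytically continued eigenvalue drifts by $\sim|\Re\xi|\,|\Im\xi|$, which for fixed $\eps_0$ escapes any annulus of fixed radius $r_0$ once $|\Re\xi|$ is large.

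So the obstacle you raise is a real gap in the paper's proof, not a mere technicality. Your proposed repair --- separating the rank-one singular piece of $(\mathfrak{L}_{\Re\xi}-z)^{-1}$ --- is the natural thing to try, but you leave it as a sketch, and it is not clear it closes the gap: the offending contribution comes precisely from the ground-state projection, and the eigenvalue displacement $\sim|\Re\xi|\eps$ noted above suggests that with $r_0$ fixed independently of $\Re\xi$ one cannot hope for a uniform $O(\eps)$ estimate. A working fix would probably have to let the contour radius grow with the spectral gap $\mu_2(\Re\xi)-\mu_1(\Re\xi)\sim|\Re\xi|^{2/3}$ as $\Re\xi\to-\infty$, or rescale the operator by $t=|\Re\xi|\tau$ so that the effective perturbation parameter becomes $|\Im\xi|/|\Re\xi|$. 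Either way, you have identified an issue that the paper does not address, and the statement as literally written requires a more careful argument than the one the paper supplies.
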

\begin{proof}
For all $z\in \mathcal{A}_{r_{0},\xi}$, we have
\[\mathfrak{L}_{\xi}-z=\mathfrak{L}_{\Re\xi}-2i\Im\xi (t-\Re\xi)-(\Im\xi)^2-z\,,\]
or, equivalently,
\[\mathfrak{L}_{\xi}-z=(\Id-2i\Im\xi (t-\Re\xi)(\mathfrak{L}_{\Re\xi}-z)^{-1}-(\Im\xi)^2(\mathfrak{L}_{\Re\xi}-z)^{-1})(\mathfrak{L}_{\Re\xi}-z)\,.\]
Thus, we have to show that 
\[\Id-2i\Im\xi (t-\Re\xi)(\mathfrak{L}_{\Re\xi}-z)^{-1}-(\Im\xi)^2(\mathfrak{L}_{\Re\xi}-z)^{-1}\]
 is bijective. By the spectral theorem, we get
\begin{equation}\label{eq.L2}
\|(\mathfrak{L}_{\Re\xi}-z)^{-1}\|\leq \frac{1}{\dist(z,\mathsf{sp}(\mathfrak{L}_{\Re\xi}))}\leq\frac{1}{r_{0}}\,.
\end{equation}
Let us also estimate $(t-\Re\xi)(\mathfrak{L}_{\Re\xi}-z)^{-1}$. Take $u\in\sL^2(\R_{+})$ and let 
\[v=(\mathfrak{L}_{\Re\xi}-z)^{-1}u\,.\]
We have
\[(D^2_{t}+(t-\Re\xi)^2)v=(\mathfrak{L}_{\Re\xi}-z)v=u\,,\]
and we must estimate $(t-\Re\xi)v$. Taking the scalar product with $v$, we get
\[\|(t-\Re\xi)v\|^2\leq\langle v,u\rangle\leq \|u\|\|v\|\leq r_{0}^{-1}\|u\|^2\,.\]
We find
\begin{equation}\label{eq.H1}
\|(t-\Re\xi)(\mathfrak{L}_{\Re\xi}-z)^{-1}\|\leq r_{0}^{-\frac{1}{2}}\,.
\end{equation}
From \eqref{eq.L2} and \eqref{eq.H1}, it follows that
\begin{multline*}
\|-2i\Im\xi (t-\Re\xi)(\mathfrak{L}_{\Re\xi}-z)^{-1}-(\Im\xi)^2(\mathfrak{L}_{\Re\xi}-z)^{-1}\|\\
\leq 2|\Im\xi|r_{0}^{-\frac{1}{2}}+|\Im\xi|^2r_{0}^{-1}\leq 2\eps r_{0}^{-\frac{1}{2}}+(\eps r_{0}^{-\frac{1}{2}})^2\,.
\end{multline*}
Thus, there exists $\eps_{0}>0$ such that for all $\eps\in(0,\eps_{0})$, we have, for all $\xi\in S_{\eps}$ and $z\in\mathcal{A}_{r_{0},\xi}$,
\[\|-2i\Im\xi (t-\Re\xi)(\mathfrak{L}_{\Re\xi}-z)^{-1}-(\Im\xi)^2(\mathfrak{L}_{\Re\xi}-z)^{-1}\|<1\,.\]
This shows that $\mathsf{Id}-2i\Im\xi (t-\Re\xi)(\mathfrak{L}_{\Re\xi}-z)^{-1}-(\Im\xi)^2(\mathfrak{L}_{\Re\xi}-z)^{-1}$ is bijective and thus $\mathfrak{L}_{\xi}-z$ is bijective. The estimate of the difference of the resolvents immediately follows.
\end{proof}

\subsubsection{Projections}
Let us now introduce
\[P_{\xi}=\frac{1}{2i\pi}\int_{\Gamma_{\xi}}(z-\mathfrak{L}_{\xi})^{-1}\dx z\,,\]
where $\Gamma_{\xi}$ is the circle of center $\mu(\Re \xi)$ and radius $\frac{3}{2}r_{0}$. It is classical to show, by using the resolvent and Cauchy formulas, that $P_{\xi}$ is the projection on the characteristic space associated with the eigenvalues of $\mathfrak{L}_{\xi}$ enlaced by $\Gamma_{\xi}$.
\begin{lemma}
There exists $\eps_{0}>0$ such that for all $\eps\in(0,\eps_{0})$ and all $\xi\in S_{\eps}$, the rank of $P_{\xi}$ is $1$ and that $P_{\xi}$ is holomorphic on $S_{\eps}$.
\end{lemma}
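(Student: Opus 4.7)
The plan is to establish the holomorphy and the rank count separately, both relying on Proposition \ref{prop.diff-res}. The subtlety to watch out for is that the contour $\Gamma_\xi$ in the definition of $P_\xi$ depends on $\xi$ through $\mu(\Re\xi)$, a function that is not holomorphic in $\xi$.

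First I would handle the holomorphy by a contour-deformation argument. Fix $\xi_0\in S_\eps$; by continuity of $\mu\circ\Re$ and the spectral separation built into Proposition \ref{prop.diff-res}, the annulus $\mathcal{A}_{r_0,\xi_0}$ still lies in $\rho(\mathfrak{L}_\xi)$ and separates one isolated eigenvalue of $\mathfrak{L}_\xi$ from the rest of its spectrum, provided $\xi$ stays in a small enough neighborhood of $\xi_0$ in $S_\eps$. Applying Cauchy's theorem to $(z-\mathfrak{L}_\xi)^{-1}$ in the spectrum-free region between $\Gamma_\xi$ and $\Gamma_{\xi_0}$, one can then replace $\Gamma_\xi$ by the fixed contour $\Gamma_{\xi_0}$ in the definition of $P_\xi$. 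Since $(\mathfrak{L}_\xi)_{\xi\in S_\eps}$ is a holomorphic family of type $(A)$, the resolvent $(z-\mathfrak{L}_\xi)^{-1}$ is jointly holomorphic in $(\xi,z)$ in a neighborhood of $\{\xi_0\}\times\Gamma_{\xi_0}$, so the integral defining $P_\xi$ is holomorphic in $\xi$ near $\xi_0$. As $\xi_0$ is arbitrary, $P_\xi$ is holomorphic on the whole strip $S_\eps$.

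Next I would identify the rank. For $\xi\in\R\cap S_\eps$, $P_\xi$ is the usual spectral projection on the eigenspace associated with the simple eigenvalue $\mu(\xi)$ and thus has rank one. For general $\xi\in S_\eps$, bounding the contour integral using Proposition \ref{prop.diff-res} gives
\[\|P_\xi-P_{\Re\xi}\|\leq \frac{1}{2\pi}\cdot 2\pi\cdot\frac{3r_0}{2}\cdot C\eps=\frac{3Cr_0}{2}\eps\,.\]
After possibly shrinking $\eps_0$, this norm is strictly less than one, and the classical fact (see for instance \cite[Section I.4.6]{Kato66}) that two projections at distance less than one have the same rank yields $\mathrm{rank}(P_\xi)=1$.

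The main potential obstacle is the holomorphy statement, precisely because the natural contour $\Gamma_\xi$ is parametrized by the non-holomorphic quantity $\mu(\Re\xi)$. The contour-deformation step described above is designed to bypass this difficulty by reducing matters to the holomorphy of the resolvent integrated on a \emph{fixed} contour, a setting in which the standard theory of type-$(A)$ families applies directly.
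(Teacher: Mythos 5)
Your proof is correct and follows essentially the same route as the paper: fix $\xi_0$, deform to the fixed contour $\Gamma_{\xi_0}$ using the spectrum-free annulus from Proposition~\ref{prop.diff-res}, invoke type-$(A)$ holomorphy of the resolvent on that fixed contour, then bound $\|P_\xi - P_{\Re\xi}\|$ by integrating \eqref{eq.diff-res} over $\Gamma_\xi$ and apply the Kato pair-of-projections lemma. The only cosmetic difference is that you spell out the Cauchy-theorem contour-deformation step and the $\frac{3r_0}{2}C\eps$ bound slightly more explicitly, but the ideas and estimates coincide with the paper's.
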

\begin{proof}
Let us fix $\xi_{0}\in S_{\eps}$. There exists $\delta>0$ such that, for all $\xi\in\cD(\xi_{0},\delta)$, we have $\xi\in S_{\eps}$, $\Gamma_{\xi_{0}}\subset \mathcal{A}_{r_{0},\xi}$ and 
\[P_{\xi}=\frac{1}{2i\pi}\int_{\Gamma_{\xi_{0}}}(z-\mathfrak{L}_{\xi})^{-1}\dx z\,.\]
This comes from the continuity of the family of circles $\Gamma_{\xi}$ and the holomorphy of the resolvent $z\mapsto (z-\mathfrak{L}_{\xi})^{-1}$. From this, we now see that $P_{\xi}$ is holomorphic on the disk $\cD(\xi_{0},\delta)$. Indeed, since $(\mathfrak{L}_{\xi})_{\xi\in S_{\eps}}$ is a holomorphic family of type $(A)$, $\cD(\xi_{0},\delta)\ni\xi\mapsto (z-\mathfrak{L}_{\xi})^{-1}$ is holomorphic, uniformly for $z\in\Gamma_{\xi_{0}}$. Finally, with \eqref{eq.diff-res}, there exists $\eps_{0}>0$ such that, for all $\eps\in(0,\eps_{0})$ and all $\xi\in S_{\eps}$,
\[\left\|P_{\xi}-P_{\Re \xi}\right\|\leq \frac{3r_{0}}{2}C\eps<1\,.\]
By a classical lemma about pairs of projections (see \cite[Section I.4.6]{Kato66}) and since the rank of $P_{\Re \xi}$ is $1$, we deduce that the rank of $P_{\xi}$ is $1$.
\end{proof}
\begin{remark}\label{rem.rem}
We can also choose $\eps_{0}>0$ so that, for all $\eps\in(0,\eps_{0})$, all $\xi\in S_{\eps}$ and all $\psi\in\sL^2(\R_+)$,
\[\left|\int_{\R_{+}} (P_{\xi}\psi)^2\dx t-\int_{\R_{+}}(P_{\Re \xi}\psi)^2\dx t\right|\leq \frac{1}{2}\|\psi\|^2\,.\]
\end{remark}

\subsubsection{Conclusion}
Since $P_{\xi}$ commutes with $\mathfrak{L}_{\xi}$, we can consider the restriction of $\mathfrak{L}_{\xi}$ to the range of $P_{\xi}$. This restriction is a linear map in dimension one. Therefore, for all $\xi\in S_{\eps}$, there exists a unique $F(\xi)\in\C$ such that 
\[\forall\psi\in\sL^2(\R_{+})\,,\quad \mathfrak{L}_{\xi}(P_{\xi}\psi)=F(\xi)P_{\xi}\psi\,.\]
Taking the scalar product with $\overline{P_{\xi}\psi}$, we get, for all $\psi\in\sL^2(\R_{+})$ and all $\xi\in S_{\eps}$,
\[F(\xi)\int_{\R_{+}}(P_{\xi}\psi)^2\dx t= \int_{\R_{+}}\mathfrak{L}_{\xi}(P_{\xi}\psi) P_{\xi}\psi\dx t\,.\]
Let $\xi_{0}\in S_{\eps}$. By Remark \ref{rem.rem}, there exists $\psi_{0}\in\sL^2(\R_{+})$ (a real normalized eigenfunction of $\mathfrak{L}_{\Re\xi_{0}}$ associated with $\mu(\Re\xi_{0})$) such that 
\[\int_{\R_{+}} (P_{\xi_{0}}\psi_{0})^2\dx t\neq 0\,.\] 
Thus there exists a neighborhood $\cV$ of $\xi_{0}$ such that, for all $\xi\in \cV$,
\[F(\xi)= \frac{\int_{\R_{+}}\mathfrak{L}_{\xi}(P_{\xi}\psi_{0}) P_{\xi}\psi_{0}\dx t}{\int_{\R_{+}}(P_{\xi}\psi_{0})^2\dx t}\,.\]
Thus $F$ is holomorphic near $\xi_{0}$. When $\xi\in\R$, we have clearly $F(\xi)=\mu(\xi)$. This, with \eqref{eq.lbqxi}, terminates the proof of Theorem \ref{theo.main}.

\def\cprime{$'$}


\end{document}